\date{}
\newlength{\defbaselineskip}
\newcommand{\setlinespacing}[1]%
           {\setlength{\baselineskip}{#1 \defbaselineskip}}
\newcommand{\actaqed}{\hfill $\actabox$}
{\medskip\noindent \textit{Proof of #1. }}%
{\actaqed \medskip}
\def\D{{\mathcal D}}
\def \<{\langle}
\def\>{\rangle}
\def \e{\epsilon}
\def \ff{\varphi}
\def\ga{\gamma}
\def \sp{\operatorname{span}}
\def \csp{\overline{\operatorname{span}}}
\def\bt{\beta}
\def\la{\lambda}
\newtheorem{Theorem}{Theorem}[section]
\newtheorem{Lemma}{Lemma}[section]
\newtheorem{Remark}{Remark}[section]
\numberwithin{equation}{section}
\newcommand{\be}{\begin{equation}}
\newcommand{\ee}{\end{equation}}
\begin{document}
\title{{Sparse approximation of individual functions} }
\author{ L. Burusheva\thanks{Lomonosov Moscow State University, } \,  and V. Temlyakov\thanks{University of South Carolina, Steklov Institute of Mathematics, and Lomonosov Moscow State University.} } \maketitle
\begin{abstract}
{Results on two different settings of asymptotic behavior of approximation characteristics of individual functions are presented. First, we discuss the following classical question for sparse approximation. Is it true that for any individual function from a given function class its sequence of errors of best sparse approximations with respect to a given 
 dictionary decays faster than the corresponding supremum over the function class? Second, we discuss sparse approximation by greedy type algorithms. We show that for any individual function from a given class we can improve the upper bound on the rate of convergence of the error of approximation by a greedy algorithm if we use some information from the previous iterations of the algorithm. We call bounds of this type {\it a posteriori bounds}.}
\end{abstract}

\section{Introduction}

  Asymptotic behavior of approximation characteristics of individual functions and of function classes are the most important fundamental problems of approximation theory. A discussion of interplay between approximation characteristics of a function class and an individual function from that class goes back to A. Lebesgue (1909) and S.N. Bernstein (1945) (see \cite{VT83}, Section 2.2, for a detailed discussion and references). In this paper we address the issue of 
 asymptotic behavior of nonlinear $m$-term approximation of individual elements (functions) of a Banach space. We present results on two different settings of this problem. We now give a very brief description of these problems and give a more detailed description later.
 In Sections \ref{A} and \ref{B} we discuss the following classical setting. Let $F$ be a given function class. Is it true that for any individual function $f\in F$ the best $m$-term approximation with respect to a given 
 dictionary decays faster than the corresponding supremum over the function class $F$? 
 In Section \ref{D} we discuss sparse approximation by greedy type algorithms. We show that for any individual function $f\in F$ we can improve the upper bound on the rate of convergence of the error of approximation by a given greedy algorithm if we use some information from the previous iterations of the algorithm. We call bounds of this type {\it a posteriori bounds}. 
 We now proceed to a detailed discussion.

Let $X$ be a real Banach space with norm $\|\cdot\|$. We say that a set of elements (functions) $\D$ from $X$ is a dictionary (symmetric dictionary) if each $g\in \D$ has norm bounded by one ($\|g\|\le1$),
$$
g\in \D \quad \text{implies} \quad -g \in \D,
$$
and $\csp \D =X$. Denote
$$
A_1^o(\D) :=\left\{f\in X: f=\sum_{i=1}^\infty c_ig_i, \quad g_i\in\D,\quad \sum_{i=1}^\infty |c_i|\le 1\right\}
$$
and denote by $A_1(\D)$ the closure in $X$ of $A_1^o(\D)$. We use the standard notation $\Sigma_m(\D)$ for the set of $m$-sparse with respect to $\D$ elements. 
We begin with a result on best $m$-term approximation:
$$
\sigma_m(f,\D):=\sigma_m(f,\D)_X:= \inf_{c_1,\dots,c_m; g_1,\dots,g_m, g_i\in\D}\left\|f-\sum_{i=1}^m c_ig_i\right\|_X.
$$

Our results are formulated in terms of modulus of smoothness of the Banach space $X$. For a Banach space $X$ we define the modulus of smoothness
$$
\rho(u) := \sup_{\|x\|=\|y\|=1} \frac{\|x+uy\|+\|x-uy\|}{2} - 1.
$$
The uniformly smooth Banach space is the one with the property
$$
\lim_{u\to 0}\rho(u)/u =0.
$$

It is well known (see \cite{VTbook}, Chapter 6) that for a Banach space $X$ with a power type modulus of smoothness $\rho(u)\le \ga u^q$, $1<q\le2$,  for $f\in A_1(\D)$ there exists (provided by the Relaxed Greedy Algorithm) $G_m(f)\in \Sigma_m(\D)\cap A_1^o(\D)$ such that 
\be\label{A1}
\sigma_m(f,\D)\le \|f-G_m(f)\| \le C(q,\ga)m^{-1/p},\quad p:=\frac{q}{q-1}.
\ee
We prove in Section \ref{A} the following $o$-bound. 
\begin{Theorem}\label{AT1} Let $X$ be a Banach space with a power type modulus of smoothness $\rho(u)\le \ga u^q$, $1<q\le2$. Then for any $f\in A_1^o(\D)$ we have
\be\label{A2}
\sigma_m(f,\D) =o(m^{-1/p}),\quad p:=\frac{q}{q-1}.
\ee
\end{Theorem}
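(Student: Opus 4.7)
The plan is to exploit the fact that membership in $A_1^o(\D)$ is strictly stronger than membership in $A_1(\D)$: it supplies an honest $\ell_1$-expansion $f=\sum_{i=1}^\infty c_ig_i$ with $g_i\in\D$ and $\sum_i|c_i|\le 1$, so the tail of the series can be made arbitrarily small in $\ell_1$-norm. The strategy is then a standard ``head plus tail'' split: the head is treated exactly (for free), while the tail, rescaled to lie in $A_1(\D)$, is treated by the already-known big-$O$ bound (\ref{A1}). Because the rescaling factor is small, this converts $O(m^{-1/p})$ into $o(m^{-1/p})$.

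Concretely, fix $\e>0$ and choose $N=N(\e)$ so large that $\e':=\sum_{i>N}|c_i|\le \e$. Write $f=f_N+r_N$ with
$$
f_N:=\sum_{i=1}^N c_ig_i\in\Sigma_N(\D),\qquad r_N:=\sum_{i>N}c_ig_i.
$$
If $\e'=0$ then $f=f_N$ and $\sigma_m(f,\D)=0$ for all $m\ge N$, so (\ref{A2}) is trivial. Otherwise $r_N/\e'\in A_1^o(\D)\subset A_1(\D)$, and (\ref{A1}) applied to $r_N/\e'$ yields, for every $k\ge 1$, an element $h_k\in\Sigma_k(\D)$ with
$$
\|r_N-\e' h_k\|\le \e' C(q,\ga)k^{-1/p}\le \e\, C(q,\ga)k^{-1/p}.
$$
Adding the exact $N$-sparse representation of $f_N$ to the $k$-sparse approximant $\e' h_k$ of $r_N$ produces an $(N+k)$-sparse approximant to $f$ with the same error bound.

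Setting $m=N+k$, this gives, for every $m>N$,
$$
\sigma_m(f,\D)\le \e\, C(q,\ga)(m-N)^{-1/p},
$$
so that
$$
\limsup_{m\to\infty} m^{1/p}\sigma_m(f,\D)\le \e\, C(q,\ga)\lim_{m\to\infty}\Bigl(\tfrac{m}{m-N}\Bigr)^{1/p}=\e\, C(q,\ga).
$$
Since $\e>0$ is arbitrary, this limsup equals zero, which is exactly (\ref{A2}).

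I do not anticipate any real obstacle: the only point requiring care is confirming that the rescaled tail $r_N/\e'$ really belongs to $A_1(\D)$ (so that (\ref{A1}) is applicable), but this is immediate from the definition of $A_1^o(\D)$. The argument is essentially the standard ``quantitative rate on a dense subclass, plus a vanishing $\ell_1$ tail'' trick, with the mild bookkeeping observation that the sparsity counts of the head and tail approximants add rather than compete, because the head is represented \emph{exactly}.
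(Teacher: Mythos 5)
Your proof is correct and uses essentially the same idea as the paper: split $f$ into an exactly represented sparse head plus a small-$\ell_1$ tail, rescale the tail into $A_1(\D)$, and apply the known bound (\ref{A1}) to it, so the small scaling factor upgrades $O(m^{-1/p})$ to $o(m^{-1/p})$. The only difference is bookkeeping (you fix the head size $N=N(\e)$ and take a limsup, while the paper lets the head and the tail approximant both have $m$ terms, yielding a $2m$-sparse approximant with error $\delta_m m^{-1/p}$, $\delta_m\to0$), which does not change the substance of the argument.
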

 
In Section \ref{B} for each $q\in (1,2]$ we give an example of a dictionary and an element in a Banach space $\ell_q$, which shows that   we cannot replace assumption $f\in A_1^o(\D)$ by a weaker assumption $f\in A_1(\D)$ in Theorem \ref{AT1} (see Theorems \ref{BT1} and \ref{BT2}). Also, in Section \ref{B} we provide an example that shows that the $o$-bound result 
does not hold if we replace best $m$-term approximation by a greedy algorithm. Theorem \ref{BT3} gives a corresponding result for the Orthogonal Greedy Algorithm.
Concluding a discussion of the rate of approximation of individual functions from classes  $ A_1^o(\D)$ and $ A_1(\D)$, we can state that the following two interesting phenomena have been discovered. First, we established that there is the $o$-bound phenomenon for the best $m$-term approximation for the class $A_1^o(\D)$ and there is no such phenomenon for the class $ A_1(\D)$, which is the closure of $A_1^o(\D)$. Second, we established that there is no 
$o$-bound phenomenon for the Orthogonal Greedy Algorithm for the class $A_1^o(\D)$.

We now proceed to a discussion of {\it a posteriori} bounds. We illustrate our results on the example of the Weak Chebyshev Greedy Algorithm, which we define momentarily. 
Introduce a new norm, associated with a dictionary $\D$, in the dual space $X'$ by the formula
$$
\|F\|_\D:=\sup_{g\in\D}F(g),\quad F\in X'.
$$
For a nonzero element $f\in X$ we let $F_f$ denote a norming (peak) functional for $f$: 
$$
\|F_f\| =1,\qquad F_f(f) =\|f\|.
$$
The existence of such a functional is guaranteed by Hahn-Banach theorem.

Let 
$\tau := \{t_k\}_{k=1}^\infty$ be a given sequence of  nonnegative numbers $t_k \le 1$, $k=1,\dots$. We define the Weak Chebyshev Greedy Algorithm (WCGA) (see~\cite{T15} and~\cite{VTbook}, Chapter 6) that is a generalization for Banach spaces of the Weak Orthogonal Greedy Algorithm defined and studied in~\cite{VT75} (see also~\cite{DTe} for Orthogonal Greedy Algorithm).

{\bf Weak Chebyshev Greedy Algorithm (WCGA)}
We define $f^c_0 := f^{c,\tau}_0 :=f$. Then for each $m\ge 1$ we inductively define

1). $\varphi^{c}_m :=\varphi^{c,\tau}_m \in \D$ is any satisfying
\[
\label{1.1}
F_{f^{c}_{m-1}}(\varphi^{c}_m) \ge t_m  \| F_{f^{c}_{m-1}}\|_\D. 
\]

2). Define
$$
\Phi_m := \Phi^\tau_m := \sp \{\varphi^{c}_j\}_{j=1}^m,
$$
and define $G_m^c := G_m^{c,\tau}$ to be the best approximant to $f$ from $\Phi_m$.

3). Denote
$$
f^{c}_m := f^{c,\tau}_m := f-G^c_m.
$$
 The following theorem is proved in~\cite{T15} (see also \cite{VTbook}, Section 6.2). 
\begin{Theorem}\label{IT1} Let $X$ be a uniformly smooth Banach space with modulus of smoothness $\rho(u)\le \gamma u^q$, $1<q\le 2$. Take a number $\e\ge 0$ and two elements $f$, $f^\e$ from $X$ such that
$$
\|f-f^\e\| \le \e,\quad
f^\e/A(\e) \in A_1(\D),
$$
with some number $A(\e)>0$.
Then we have
\be\label{I2}
\|f^{c,\tau}_m\| \le \max\left\{2\e,\, C(q,\gamma)(A(\e)+\e)\Big(1+\sum_{k=1}^mt_k^p\Big)^{-1/p}\right\},
\quad p:=\frac{q}{q-1}.
\ee
\end{Theorem}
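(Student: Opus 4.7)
The plan is to imitate the standard Weak Chebyshev Greedy Algorithm argument from \cite{VTbook}, Chapter 6, adapted to the presence of the perturbation parameter $\e$. The heart of the proof is a one‑step recursive inequality for $a_m := \|f^{c,\tau}_m\|$ that combines the smoothness of $X$, the weak‑greediness of the step, and a lower bound on $\|F_{f^{c}_{m-1}}\|_\D$ coming from the hypothesis $f^\e/A(\e)\in A_1(\D)$.

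First I would record the standard consequence of $\rho(u)\le\ga u^q$: for every nonzero $x\in X$ and every $y\in X$,
\be\label{planmod}
\|x+y\|\le \|x\|+F_x(y)+2\ga\|x\|^{1-q}\|y\|^q,
\ee
obtained by combining the definition of $\rho$ with $\|x-y\|\ge F_x(x-y)=\|x\|-F_x(y)$. Applying \eqref{planmod} with $y=-\lambda\ff^c_m$ and using step~1) of the WCGA, I get
$$
\|f^c_{m-1}-\lambda\ff^c_m\|\le \|f^c_{m-1}\|-\lambda t_m\|F_{f^c_{m-1}}\|_\D+2\ga\lambda^q\|f^c_{m-1}\|^{1-q}.
$$
Since $G^c_{m-1}+\lambda\ff^c_m\in\Phi_m$, the Chebyshev step gives $a_m\le\|f^c_{m-1}-\lambda\ff^c_m\|$, so the above inequality is valid for $\|f^c_m\|$ as well, for every $\lambda\ge 0$.

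Next I would produce the lower bound on $\|F_{f^c_{m-1}}\|_\D$. The Chebyshev optimality of $G^c_{m-1}$ yields the orthogonality relation $F_{f^c_{m-1}}(\phi)=0$ for every $\phi\in\Phi_{m-1}$, whence $F_{f^c_{m-1}}(f)=F_{f^c_{m-1}}(f^c_{m-1})=a_{m-1}$. Writing $f=f^\e+(f-f^\e)$ and using $\|f-f^\e\|\le\e$ together with $F_{f^c_{m-1}}(f^\e)\le A(\e)\|F_{f^c_{m-1}}\|_\D$ (which follows from $f^\e/A(\e)\in A_1(\D)$ by a density/linearity argument), I obtain
$$
a_{m-1}-\e \le A(\e)\,\|F_{f^c_{m-1}}\|_\D.
$$
I split into cases. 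If $a_{m-1}\le 2\e$, then since $(a_m)$ is nonincreasing (again by Chebyshev optimality) we already have $a_m\le 2\e$ and the first branch of the maximum in \eqref{I2} holds. Otherwise $a_{m-1}>2\e$ and the above gives $\|F_{f^c_{m-1}}\|_\D\ge a_{m-1}/(2A(\e))$.

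Plugging this lower bound into the smoothness estimate and optimizing in $\lambda$ (setting the derivative to zero gives $\lambda\sim (t_m a_{m-1}/A(\e))^{1/(q-1)}a_{m-1}^{1/?}$; routine), I arrive at a recursion of the form
$$
a_m \le a_{m-1}\Bigl(1-C(q,\ga)\bigl(t_m a_{m-1}/A(\e)\bigr)^{p}\Bigr),
$$
valid as long as $a_{m-1}>2\e$. Passing to $b_m:=a_m^{-p}$ and using $(1-x)^{-p}\ge 1+px$ on $[0,1)$, this linearizes to $b_m\ge b_{m-1}+c(q,\ga)\,t_m^p/A(\e)^p$, which iterates to $b_m\ge b_0+c(q,\ga)A(\e)^{-p}\sum_{k=1}^m t_k^p$. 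Since every element of $A_1(\D)$ has norm at most $1$, $\|f\|\le\|f^\e\|+\e\le A(\e)+\e$, so $b_0\ge (A(\e)+\e)^{-p}$, and combining these two lower bounds with $A(\e)\le A(\e)+\e$ yields $b_m\ge C'(q,\ga)(A(\e)+\e)^{-p}\bigl(1+\sum_{k=1}^m t_k^p\bigr)$, which is the second branch of \eqref{I2} after raising to the $-1/p$ power.

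The main obstacle will be the bookkeeping in step~(5)–(6): one must make sure the $\e$‑term in the lower bound $a_{m-1}-\e\le A(\e)\|F_{f^c_{m-1}}\|_\D$ can be absorbed (this is precisely why the threshold $2\e$ enters the max in \eqref{I2}), and one must merge the two lower bounds $b_0\ge(A(\e)+\e)^{-p}$ and the running sum $c\,A(\e)^{-p}\sum t_k^p$ into a single constant depending only on $q,\ga$; this is where the factor $A(\e)+\e$ rather than $A(\e)$ appears in the final estimate. The rest is a direct imitation of the $\e=0$ WCGA proof.
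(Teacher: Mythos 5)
Your proposal is correct and follows essentially the same route as the paper's own machinery (the paper proves the a posteriori variant, Theorem \ref{IT1v}, with exactly this scheme: the smoothness inequality plus the weak greedy step and the $A_1(\D)$ duality bound give the recursion of Lemma \ref{DL1}, which is then iterated as in Lemmas \ref{EL1}--\ref{EL2}). The only differences are cosmetic bookkeeping: you telescope the reciprocals $a_k^{-p}$ directly via Bernoulli instead of invoking the $x^r\le x$ trick and Lemma \ref{EL1}, and you absorb $\e$ through $b_0\ge (A(\e)+\e)^{-p}$ rather than through the paper's split $A(\e)\ge\e$ with $B=2A(\e)$; both yield the stated bound with the factor $A(\e)+\e$.
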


Theorem \ref{IT1} gives the rate of convergence of the WCGA based on the {\it a priori} information on the element $f$. One of the main goals of this paper is to improve the error bound (\ref{I2}) of Theorem \ref{IT1} using the information, which can be obtained at the
previous $m$ iterations of the WCGA. We introduce some notations. Let $P_\Phi$ denote the operator of Chebyshev projection onto subspace $\Phi$ (the operator of mapping to the best approximant in $\Phi$). Denote 
\be\label{I3}
\ff_m := \ff_m^c -P_{\Phi_{m-1}}(\ff_m^c),\quad v_m:=\|\ff_m\|.
\ee
Clearly, $v_m\le 1$ for all $m$. 
We prove in Section \ref{D} the following {\it a posteriori} result.

\begin{Theorem}\label{IT1v} Let $X$ be a uniformly smooth Banach space with modulus of smoothness $\rho(u)\le \gamma u^q$, $1<q\le 2$. Take a number $\e\ge 0$ and two elements $f$, $f^\e$ from $X$ such that
$$
\|f-f^\e\| \le \e,\quad
f^\e/A(\e) \in A_1(\D),
$$
with some number $A(\e)>0$.
Then we have
$$
\|f^{c,\tau}_m\| \le \max\left\{2\e,\, C(q,\gamma)A(\e)\Big(1+\sum_{k=1}^m(t_k/v_k)^p\Big)^{-1/p}\right\},
\quad p:=\frac{q}{q-1}.
$$
with $C(q,\ga)= 4(2\ga)^{1/q}$.
\end{Theorem}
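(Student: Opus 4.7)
The plan is to follow the template of the proof of Theorem \ref{IT1} line by line, but at the crucial perturbation step to replace the greedy direction $\ff_m^c$ (which satisfies only $\|\ff_m^c\|\le 1$) by its reduced version $\ff_m = \ff_m^c - P_{\Phi_{m-1}}(\ff_m^c)$ of norm $v_m \le 1$. Two observations make this substitution legitimate. First, $\ff_m \in \Phi_m$, so for every $\la \in \R$ we have $G_{m-1}^c + \la \ff_m \in \Phi_m$ and hence the Chebyshev projection property gives
\[
\|f_m^c\| \le \|f_{m-1}^c - \la \ff_m\|.
\]
Second, the norming functional $F_{f_{m-1}^c}$ annihilates $\Phi_{m-1}$ (again by Chebyshev projection), so
\[
F_{f_{m-1}^c}(\ff_m) = F_{f_{m-1}^c}(\ff_m^c) \ge t_m \|F_{f_{m-1}^c}\|_\D.
\]

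Next I apply the standard modulus-of-smoothness bound
\[
\|x - \la y\| \le \|x\| + 2\|x\|\rho(\la \|y\|/\|x\|) - \la F_x(y)
\]
with $x = f_{m-1}^c$, $y = \ff_m$, which makes the modulus term $\rho(\la v_m/\|f_{m-1}^c\|)$ rather than $\rho(\la/\|f_{m-1}^c\|)$ as in Theorem \ref{IT1}. This is precisely where $v_m$ enters and sharpens the estimate. I then combine it with the standard lower bound
\[
\|F_{f_{m-1}^c}\|_\D \ge \frac{\|f_{m-1}^c\|-\e}{A(\e)},
\]
which comes from $f^\e/A(\e) \in A_1(\D)$ together with $F_{f_{m-1}^c}(f - f^\e) \le \e$ and $F_{f_{m-1}^c}|_{\Phi_{m-1}} = 0$; assuming $\|f_{m-1}^c\| \ge 2\e$ this gives $\|F_{f_{m-1}^c}\|_\D \ge \|f_{m-1}^c\|/(2A(\e))$. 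Substituting $\rho(u) \le \ga u^q$ and optimizing in $\la$ leads to a one-step recursion of the form
\[
\|f_m^c\|^p \le \|f_{m-1}^c\|^p \Big(1 - c(q,\ga)(t_m/v_m)^p A(\e)^{-p}\Big), \qquad p = q/(q-1),
\]
with an explicit constant $c(q,\ga)$.

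Finally, the standard numerical-sequence lemma from \cite{VTbook}, Chapter~6, converts this recursion into
\[
\|f_m^c\|^p \le C(q,\ga)^p A(\e)^p \Big(1 + \sum_{k=1}^m (t_k/v_k)^p\Big)^{-1}
\]
as long as all intermediate iterates $\|f_k^c\|$ remain at least $2\e$; any iterate that drops below $2\e$ is absorbed by the outer $\max\{2\e,\cdot\}$ in the statement (and by monotonicity of $\|f_m^c\|$ in $m$ coming from the Chebyshev projection). The structural argument is essentially that of Theorem \ref{IT1}, with the sole change being the substitution $\|\ff_m^c\| \le 1 \leadsto \|\ff_m\| = v_m$ inside the modulus-of-smoothness term; this cleanly upgrades $t_m$ to $t_m/v_m$ and simultaneously strips off the additive $\e$ in the prefactor, producing $A(\e)$ in place of $A(\e)+\e$. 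The main obstacle I expect is purely arithmetic: tracking constants through the optimization in $\la$ and through the sequence lemma so as to land exactly on $C(q,\ga) = 4(2\ga)^{1/q}$.
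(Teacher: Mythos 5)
Your route is essentially the paper's own proof: the paper feeds the normalized reduced direction $\ff'=\ff_m/v_m$ into its General Error Reduction Lemma (Lemma \ref{DL1}) so that the weakness parameter becomes $t_m/v_m$, which is the same as your keeping $\ff_m$ and letting $v_m$ enter through the modulus term (just the reparametrization $\la\mapsto\la v_m$), and the remaining steps --- the Chebyshev projection property, $F_{f_{m-1}^c}$ vanishing on $\Phi_{m-1}$, the bound $\|F_{f_{m-1}^c}\|_\D\ge(\|f_{m-1}^c\|-\e)/A(\e)$, the reductions to $A(\e)\ge\e$ and $\|f_k^c\|>2\e$, and the sequence lemma applied with $B=2A(\e)$ yielding $C(q,\ga)=4(2\ga)^{1/q}$ --- coincide with the paper's. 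One slip to fix when writing it out: optimizing in $\la$ gives the recursion $a_k^p\le a_{k-1}^p\bigl(1-r_k^p a_{k-1}^p/(A_q(2A(\e))^p)\bigr)$ with $r_k=t_k/v_k$ (as inside the paper's Lemma \ref{EL2}), i.e.\ the decrement necessarily carries the extra factor $a_{k-1}^p$; the constant-decrement form you displayed does not follow from the optimization (it would even imply exponential decay) and is not the form the standard sequence lemma (Lemma \ref{EL1}) converts into the $\bigl(1+\sum_{k}(t_k/v_k)^p\bigr)^{-1/p}$ bound.
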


\begin{Remark}\label{IR1} At the greedy step (step 1).) of the WCGA we have a freedom in choosing an element $\ff_m^c$. We only require the inequality
\be\label{I5}
F_{f^{c}_{m-1}}(\varphi^{c}_m) \ge t_m  \| F_{f^{c}_{m-1}}\|_\D.
\ee
Theorem \ref{IT1v} shows that we can use this freedom to our advantage, choosing at the $m$th iteration of the algorithm an element
$\ff_m^c$, which satisfies (\ref{I5}), with the smallest $v_m$. 
\end{Remark}

Note that the first result on the {\it a posteriori} error bound in a style of Theorem \ref{IT1v} was obtained in \cite{GQTC}. The authors proved Theorem \ref{IT1v} in a special case of a Hilbert space under assumption that $\e=0$ (in other words, under assumption $f\in A_1(\D)$).

\section{Upper bounds in approximation of individual elements}
\label{A}

 {\bf Proof of Theorem \ref{AT1}.}  Theorem \ref{AT1} is a direct corollary of the following lemma. 
\begin{Lemma}\label{AL1} Let $X$ be a Banach space with a power type modulus of smoothness $\rho(u)\le \ga u^q$, $1<q\le2$. Then for any $f\in A_1^o(\D)$ there exist a sequence of numbers
$\{\delta_m\}$ such that $\delta_m \to 0$ as $m\to \infty$ and a sequence of elements $s_m\in \Sigma_{2m}(\D)$ with properties: $s_m\in A_1^o(\D)$ and
\be\label{A3}
\|f-s_m\|_X \le \delta_m m^{-1/p}.
\ee
\end{Lemma}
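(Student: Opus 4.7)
The plan is to exploit the fact that $f\in A_1^o(\D)$ provides an actual representation $f=\sum_{i=1}^\infty c_ig_i$ with $g_i\in\D$ and $\sum_{i=1}^\infty|c_i|\le 1$, and then to apply the $O$-bound (\ref{A1}) (supplied by the Relaxed Greedy Algorithm) to a rescaled \emph{tail} of this series, whose $\ell_1$-mass can be made arbitrarily small. The smallness of the tail's $\ell_1$-mass will produce the $o(1)$ factor that upgrades the $O(m^{-1/p})$ rate to $o(m^{-1/p})$.

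Fix such a representation, and write $f=f_N+r_N$ with $f_N:=\sum_{i\le N}c_ig_i\in\Sigma_N(\D)$ and $r_N:=\sum_{i>N}c_ig_i$; set $\eta_N:=\sum_{i>N}|c_i|$, so $\eta_N\to 0$ as $N\to\infty$ (if $\eta_N=0$ for some $N$ the conclusion is trivial for large $m$, so assume $\eta_N>0$ throughout). Then $r_N/\eta_N\in A_1^o(\D)$, so (\ref{A1}) yields $G_k=G_k(r_N/\eta_N)\in\Sigma_k(\D)\cap A_1^o(\D)$ with $\|r_N/\eta_N-G_k\|\le C(q,\gamma)k^{-1/p}$. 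Now pick any $N(m)\to\infty$ with $N(m)\le m$ (for instance $N(m)=\lfloor\sqrt{m}\rfloor$), set $k:=2m-N(m)\ge m$, and define
$$s_m := f_{N(m)} + \eta_{N(m)}G_k.$$
By construction $s_m\in\Sigma_{2m}(\D)$, and the error obeys
$$\|f-s_m\|=\|r_{N(m)}-\eta_{N(m)}G_k\|\le C(q,\gamma)\,\eta_{N(m)}\,k^{-1/p}\le C(q,\gamma)\,\eta_{N(m)}\,m^{-1/p},$$
so $\delta_m:=C(q,\gamma)\,\eta_{N(m)}\to 0$, as required.

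I do not foresee a serious analytical obstacle; the argument uses no new input beyond (\ref{A1}). The one place that needs genuine care is the $\ell_1$-book-keeping establishing $s_m\in A_1^o(\D)$: the coefficient sums of $f_{N(m)}$ and of $\eta_{N(m)}G_k$ are at most $1-\eta_{N(m)}$ and $\eta_{N(m)}$ respectively (the latter because $G_k\in A_1^o(\D)$ carries coefficient mass $\le 1$), so they total at most $1$. Conceptually, the lemma simply reflects that the distinction between $A_1^o(\D)$ and its closure $A_1(\D)$ is exactly the availability of a convergent $\ell_1$-representation whose tails vanish, and this vanishing is what drives the $o$-phenomenon asserted in (\ref{A2}).
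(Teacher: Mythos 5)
Your proof is correct and follows essentially the same route as the paper: keep the first $N$ terms of the $\ell_1$-representation exactly, rescale the tail by its $\ell_1$-mass, apply the bound (\ref{A1}) to the rescaled tail, and take $\delta_m$ proportional to that vanishing tail mass (the paper simply fixes $N=m$ and uses an $m$-term greedy approximant, i.e.\ $s_m=\sum_{i\le m}c_ig_i+\bt_m G_m\bigl((f-\sum_{i\le m}c_ig_i)/\bt_m\bigr)$, whereas you allow $N(m)\le m$ and $k=2m-N(m)$, an inessential variation). Your $\ell_1$-book-keeping for $s_m\in A_1^o(\D)$ and the handling of the trivial case $\eta_N=0$ are both fine.
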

\begin{proof} Let $f\in A_1^o(\D)$ have a representation 
$$
f=\sum_{i=1}^\infty c_ig_i,\quad g_i\in \D,\quad \sum_{i=1}^\infty |c_i| \le 1.
$$
Denote
$$
\bt_m:= \sum_{i>m}|c_i|.
$$
Clearly, $\bt_m\to 0$ as $m\to\infty$. We build the approximant $s_m$ as a sum of two $m$-term approximants: $s_m=s_m^1+s_m^2$. Set
$$
s_m^1 := \sum_{i=1}^m c_ig_i.
$$
Denote $h_m:= (f-s_m^1)\bt_m^{-1}$. It is clear that $h_m\in A_1^o(\D)$. By (\ref{A1})
\be\label{A4}
\|h_m-G_m(h_m)\|\le C(q,\ga)m^{-1/p}.
\ee
Set 
$$
s_m^2 := \bt_mG_m(h_m).
$$
Then
$$
\|f-s_m\| \le \bt_m\|h_m-G_m(h_m)\| \le \bt_mC(q,\ga)m^{-1/p}.
$$
It is clear that $s_m\in A_1^o(\D)$. Thus, setting $\delta_m:=\bt_m C(q,\ga)$ we complete the proof of Lemma \ref{AL1}.
\end{proof}

 \section{Lower bounds in approximation of individual elements}
\label{B}
We begin with a result for the Hilbert space $\ell_2$.
\begin{Theorem}\label{BT1}
There are a dictionary $\D$ in the Hilbert space $\ell_2$ and an element $f\in A_1(\D)$ such that 
$$
\sigma _m(f,\D)=\frac{1}{\sqrt{2}} (m+1)^{-1/2}.
$$
\end{Theorem}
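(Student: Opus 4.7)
The plan is an explicit construction. I would work in $\ell_2 = \ell_2(\mathbb{N}_0)$ with orthonormal basis $\{e_n\}_{n\ge 0}$, define the unit vectors $d_n := (e_0 + e_n)/\sqrt 2$ for $n\ge 1$, take the symmetric dictionary $\D := \{\pm d_n : n\ge 1\}$, and set $f := e_0/\sqrt 2$. The only nontrivial dictionary axiom to check is $\csp\D=\ell_2$, and this will follow automatically once $e_0\in\csp\D$, since then $e_n=\sqrt 2\, d_n-e_0$ for every $n\ge 1$.

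For $f\in A_1(\D)$, I would use the Ces\`aro averages
$$f_N := \frac{1}{N}\sum_{n=1}^N d_n = \frac{1}{\sqrt 2}\,e_0 + \frac{1}{N\sqrt 2}\sum_{n=1}^N e_n.$$
Each $f_N$ lies in $A_1^o(\D)$, because the sum of absolute values of coefficients equals $N\cdot (1/N)=1$, and $\|f_N-f\|_{\ell_2}=(2N)^{-1/2}\to 0$. Hence $f\in A_1(\D)$ and, incidentally, $e_0\in\csp\D$.

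The core step is the computation of $\sigma_m(f,\D)$. Any $m$-term candidate may be written as $\sum_{i=1}^m b_i d_{n_i}$ with distinct $n_i\ge 1$ (signs absorbed into $b_i\in\R$). Setting $s:=\sum_i b_i$ and using orthogonality of $e_0,e_{n_1},\dots,e_{n_m}$,
$$\Big\|f-\sum_{i=1}^m b_i d_{n_i}\Big\|^2 = \frac{(1-s)^2 + \sum_i b_i^2}{2}.$$
For fixed $s$, Cauchy--Schwarz gives $\sum_i b_i^2\ge s^2/m$ with equality at $b_i\equiv s/m$, so the problem reduces to minimising $(1-s)^2 + s^2/m$ over $s\in\R$; the optimum is at $s=m/(m+1)$ with value $1/(m+1)$, yielding
$$\sigma_m(f,\D) = \frac{1}{\sqrt{2(m+1)}} = \frac{1}{\sqrt 2}\,(m+1)^{-1/2},$$
with the infimum attained by $b_i\equiv 1/(m+1)$ and any distinct $\{n_i\}$.

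No step here is delicate. What actually drives the example is that, because $f$ lies on the $e_0$-axis and every $d_n$ carries the same $e_0$-component, the approximation error sees the coefficients only through the two scalars $s$ and $\sum_i b_i^2$ and is blind to the index set $\{n_i\}$; that blindness is precisely what forces the $(m+1)^{-1/2}$ floor and, via Theorem \ref{AT1}, shows that the hypothesis $f\in A_1^o(\D)$ cannot be relaxed to $f\in A_1(\D)$.
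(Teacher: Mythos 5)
Your construction is, up to a sign and an index shift, exactly the paper's: the dictionary $\frac{1}{\sqrt2}(e_0+e_n)$ with $f=\frac{1}{\sqrt2}e_0$, membership in $A_1(\D)$ via the averages $\frac1N\sum_{n\le N}d_n$, and the exact value $\frac{1}{\sqrt{2(m+1)}}$; the proof is correct. The only (harmless) difference is that the paper identifies the optimal $m$-term approximant as the orthogonal projection $\frac{1}{m+1}\sum_{i=1}^m g_i$ and invokes the symmetry of the construction, whereas you minimize $(1-s)^2+\sum_i b_i^2$ directly via Cauchy--Schwarz, which handles all index sets at once.
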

\begin{proof} Let $\{e_k\}_{k=1}^\infty$ be a canonical basis of $\ell_2$, i.e. $e_1=(1,0,0,\dots)$, $e_k=(0,\dots,0,1,0,\dots)$ with $1$ at the $k$th place, $k=2,3,\dots$. 
Let us take  a dictionary $\D:=\{g_k\}_{k=1}^\infty$ such that $g_k:=1/\sqrt{2}(e_{k+1}-e_1)$, $k=1,2,\dots$.  
It is clear that $\|g_k\|=1$, $k=1,2,\dots$. It will follow from our further argument that the closure 
of $\D$ is the whole space $\ell_2$. For that it is sufficient to check that $e_1$ can be approximated arbitrarily well by linear combinations of elements from $\D$. Consider $f:=-\frac{1}{\sqrt{2}}e_1$. Then it is clear that
$$
\sigma_m(f,\D)=\inf_{c_1,\dots,c_m; \varphi_1,\dots,\varphi_m, \varphi_i\in\D}\left\|f-\sum_{i=1}^m c_i\varphi_i\right\|=\inf_{c_1,\dots,c_m}\left\|f-\sum_{i=1}^m c_ig_i\right\|.
$$

Therefore,  $\sigma_m(f,\D)$ is equal to the distance from $f$ to the $m$-dimensional subspace spanned by $g_1,...,g_m$. It is easy to see that the element $h:=1/(m+1)\sum_{i=1}^m g_i$ is the orthogonal projection of the vector $f$ onto that subspace. It follows from the identity
$$
f-h = -\frac{1}{\sqrt{2}(m+1)}\sum_{k=1}^{m+1} e_k.
$$ 
Therefore,
$$
\sigma_m(f,\D)=\left\|f- h\right\|=\sqrt{(m+1)\cdot\frac{1}{2(m+1)^2}}=\frac{1}{\sqrt{2(m+1)}}.
$$
In particular, this implies that $\D$ is a dictionary and that $f\in A_1(\D)$.
 The proof of Theorem \ref{BT1} is complete.
\end{proof}

We now prove a result similar to Theorem \ref{BT1} for the Banach spaces $\ell_q$, $1<q<2$.

\begin{Theorem}\label{BT2} Let $1<q<2$.
There are a dictionary $\D$ in the Banach space $\ell_q$ and an element $f\in A_1(\D)$ such that 
$$
\sigma _m(f,\D)\ge  2^{-1-1/q} m^{-1/p},\quad p=\frac{q}{q-1}.
$$
\end{Theorem}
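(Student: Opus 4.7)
The plan is to mimic the construction from the proof of Theorem~\ref{BT1}, rescaled to unit $\ell_q$-norm. Set $a:=2^{-1/q}$, $g_k:=a(e_{k+1}-e_1)$ for $k\ge 1$, and $f:=-a\,e_1$; then $\|g_k\|_q^q = a^q\cdot 2 = 1$. To see $f\in A_1(\D)$ I would use the Cesaro-type average
$$
\frac1N\sum_{i=1}^N g_i = -a\,e_1 + \frac{a}{N}\sum_{i=1}^N e_{i+1},
$$
whose $\ell_q$-distance to $f$ equals $aN^{1/q-1}\to 0$ because $q>1$; the coefficient total is $1$, so $f$ is a limit of elements of $A_1^o(\D)$. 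The same approximants show $e_1\in\csp\D$, and then each $e_{k+1}=a^{-1}g_k+e_1\in\csp\D$, so $\D$ is indeed a dictionary.

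For the lower bound, permutation invariance of the $\ell_q$-norm makes the infimum of $\|f-\sum_{j=1}^m c_j g_{k_j}\|_q$ independent of which $m$ distinct indices $k_1,\dots,k_m$ are chosen, so I may take $k_j=j$. Since
$$
f-\sum_{j=1}^m c_j g_j = -a\Bigl(1-\sum_j c_j\Bigr)e_1 - a\sum_j c_j\,e_{j+1},
$$
raising to the $q$-th power gives
$$
\sigma_m(f,\D)^q = a^q\inf_{c_1,\dots,c_m\in\R}\Bigl(\Bigl|1-\sum_{j=1}^m c_j\Bigr|^q + \sum_{j=1}^m |c_j|^q\Bigr).
$$

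I would carry out the inner minimization in two stages. For fixed $s:=\sum_j c_j$, strict convexity of $t\mapsto |t|^q$ (which uses $q>1$) forces $c_j=s/m$, giving $\sum_j|c_j|^q = |s|^q m^{1-q}$. The one-dimensional problem $\min_s\bigl[(1-s)^q+s^q m^{1-q}\bigr]$ has first-order condition $(s/m)^{q-1}=(1-s)^{q-1}$, i.e.\ $s=m/(m+1)$, at which the value equals $(m+1)^{-q}+m(m+1)^{-q} = (m+1)^{1-q}$. Hence $\sigma_m(f,\D) = a(m+1)^{-1/p} = 2^{-1/q}(m+1)^{-1/p}$, and the claimed bound reduces to $(m+1)^{-1/p}\ge \tfrac12 m^{-1/p}$, i.e.\ $m/(m+1)\ge 2^{-p}$, which holds for all $m\ge 1$ since $p>1$.

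I expect the main technical hurdle to be the two-stage minimization: one must justify that the identified critical point is the global minimum over all of $\R^m$, which follows from joint convexity of the objective in $(c_1,\dots,c_m)$ and an easy inspection of behavior as $|c_j|\to\infty$. Everything else---the unit-norm check, membership of $f$ in $A_1(\D)$, the symmetry reduction, and the final elementary inequality---is bookkeeping.
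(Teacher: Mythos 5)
Your proposal is correct, and the construction (dictionary $g_k=2^{-1/q}(e_{k+1}-e_1)$, target $f=-2^{-1/q}e_1$, Cesaro-type averages to show $f\in A_1(\D)$ and that $\D$ spans $\ell_q$) is exactly the paper's. Where you diverge is the lower bound: after reducing, as the paper does, to
$\inf_{c}\bigl(\bigl|1-\sum_k c_k\bigr|^q+\sum_k|c_k|^q\bigr)^{1/q}$,
the paper settles it with a two-case argument --- if $\sum_k c_k\le 1/2$ the first term alone gives $2^{-1-1/q}$, and if $\sum_k c_k\ge 1/2$ H\"older's inequality gives $\bigl(\sum_k|c_k|^q\bigr)^{1/q}\ge m^{-1/p}\sum_k|c_k|\ge \tfrac12 m^{-1/p}$ --- whereas you solve the minimization exactly by symmetrization at fixed $s=\sum_k c_k$ plus a one-dimensional first-order condition, obtaining $\sigma_m(f,\D)=2^{-1/q}(m+1)^{-1/p}$. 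Your route requires the (routine) justification that the symmetric critical point is the global minimum, which you correctly flag and which follows from convexity and coercivity; in exchange it yields the sharp value, i.e.\ precisely the asymptotic relation (\ref{B2}) that the paper only mentions as obtainable by ``a little refinement,'' and the stated bound then follows from the elementary inequality $m/(m+1)\ge 2^{-p}$. The paper's case split is shorter and avoids any optimization, but gives only the constant $2^{-1-1/q}$. Both arguments are sound; minor bookkeeping points in yours (permutation invariance to reduce to $g_1,\dots,g_m$, and that repeated indices only help) are the same implicit steps the paper takes in (\ref{B1}).
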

\begin{proof} Let $\{e_k\}_{k=1}^\infty$ be a canonical basis of $\ell_q$, i.e. $e_1=(1,0,0,\dots)$, $e_k=(0,\dots,0,1,0,\dots)$ with $1$ at the $k$th place, $k=2,3,\dots$. 
Let us take  a dictionary $\D:=\{g_k\}_{k=1}^\infty$ such that $g_k:= 2^{-1/q}(e_{k+1}-e_1)$, $k=1,2,\dots$.  
It is clear that $\|g_k\|:=\|g_k\|_{\ell_q}=1$, $k=1,2,\dots$.   Consider $f:=- 2^{-1/q}e_1$. Then it is clear that
\be\label{B1}
\sigma_m(f,\D)=\inf_{c_1,\dots,c_m; \varphi_1,\dots,\varphi_m, \varphi_i\in\D}\left\|f-\sum_{i=1}^m c_i\varphi_i\right\|=\inf_{c_1,\dots,c_m}\left\|f-\sum_{i=1}^m c_ig_i\right\|.
\ee

Therefore,  $\sigma_m(f,\D)$ is equal to the distance from $f$ to the $m$-dimensional subspace spanned by $g_1,...,g_m$. First, we prove that $\D$ is a dictionary and that $f\in A_1(\D)$.
Consider the element $h:=1/(m+1)\sum_{i=1}^m g_i$.   It follows from the identity
$$
f-h = -\frac{1}{ 2^{1/q}(m+1)}\sum_{k=1}^{m+1} e_k
$$ 
that 
$$
\left\|f- h\right\|=\left((m+1)\cdot\frac{1}{2(m+1)^q}\right)^{1/q}\le 2^{-1/q} m^{-1/p}.
$$
This implies that $\D$ is a dictionary and that $f\in A_1(\D)$.

Second, we estimate from below the $\sigma_m(f,\D)$. We have from (\ref{B1})
$$
 \sigma_m(f,\D)=\inf_{c_1,\dots,c_m}\left\|f-\sum_{i=1}^m c_ig_i\right\| = \inf_{c_1,\dots,c_m}2^{-1/q}\left(\left|1-\sum_{k=1}^m c_k\right|^q +\sum_{k=1}^m |c_k|^q\right)^{1/q}.
 $$
 Therefore, if $\sum_{k=1}^m c_k \le 1/2$ then $\sigma_m(f,\D) \ge 2^{-1-1/q}$. If $\sum_{k=1}^m c_k \ge 1/2$ then
 $$
 \left(\sum_{k=1}^m |c_k|^q\right)^{1/q}\ge m^{-1/p}\sum_{k=1}^m |c_k| \ge 2^{-1}m^{-1/p}.
 $$
 
 This completes the proof of Theorem \ref{BT2}.
  
\end{proof}
Note, that a little refinement of the above proof of Theorem \ref{BT2} gives the asymptotic 
relation
\be\label{B2}
\sigma_m(f,\D)\approx 2^{-1/q} m^{-1/p}.
\ee

It is well known that the space $\ell_q$, $1<q\le2$, is a uniformly smooth Banach space with modulus of smoothness $\rho(u)\le \gamma u^q$. Thus, Theorems \ref{BT1} and \ref{BT2} show that for all $q\in (1,2]$ the condition $f\in A_1^o(\D)$ in Theorem \ref{AT1} cannot be replaced by a weaker condition $f\in A_1(\D)$.

Theorem \ref{AT1} provides the $o$-bound phenomenon for the best $m$-term approximation of elements from $A_1^o(\D)$. We now show on the example of the Orthogonal Greedy Algorithm (OGA), which is the WCGA with weakness sequence $\tau =\{1\}$ defined on a Hilbert space, that there is no $o$-bound phenomenon for greedy algorithms. 

\begin{Theorem}\label{BT3}
There are a dictionary $\D$ in the Hilbert space $\ell_2$ and an element $f\in A_1^o(\D)$, which is a linear combination of two elements of the dictionary $\D$, such that for the $m$th residual $f_m^o$ of one of the realizations of the OGA we have
$$
 \|f_m^o\|=\frac{1}{\sqrt{2}} (m+1)^{-1/2}.
$$
\end{Theorem}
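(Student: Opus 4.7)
The plan is to modify the construction of Theorem \ref{BT1} by adjoining two extra dictionary elements that realize $f$ as a two-term combination, while being designed to only tie (never strictly beat) the ``distracting'' elements $g_k$ at every greedy step.

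In $\ell_2$ with canonical basis $\{e_k\}_{k\ge 0}$, set
\[
g_k:=\tfrac{1}{\sqrt{2}}(e_{k+1}-e_0)\ \ (k\ge 1),\qquad d_1:=\tfrac{1}{\sqrt{2}}(e_1-e_0),\qquad d_2:=-\tfrac{1}{\sqrt{2}}(e_0+e_1),
\]
and let $\D$ be the symmetric dictionary generated by $\{g_k\}_{k\ge 1}\cup\{d_1,d_2\}$. Take $f:=-\tfrac{1}{\sqrt{2}}e_0=\tfrac12 d_1+\tfrac12 d_2$, so $f\in A_1^o(\D)$ is exactly a two-element combination of elements of $\D$. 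The closure-of-span requirement is immediate since $d_1-d_2=\sqrt{2}\,e_1$ and $d_1+d_2=-\sqrt{2}\,e_0$ recover $e_0,e_1$, while the $g_k$'s then recover $e_2,e_3,\ldots$

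The core claim is that there is a realization of OGA whose $m$-th selection is $\varphi_m^o=g_m$ for every $m\ge 1$. Granting this, the same symmetry argument used in the proof of Theorem \ref{BT1} --- the Gram matrix of $g_1,\ldots,g_m$ equals $\tfrac12(I+J)$ with $I$ the identity and $J$ the all-ones matrix, and $\langle f,g_j\rangle=\tfrac12$ for every $j$ --- forces the Chebyshev coefficients to equal $\tfrac{1}{m+1}$, yielding
\[
f_m^o=-\frac{1}{\sqrt{2}(m+1)}\Big(e_0+\sum_{k=2}^{m+1}e_k\Big),\qquad \|f_m^o\|=\frac{1}{\sqrt{2}}(m+1)^{-1/2},
\]
which is the desired bound.

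To close the induction I must verify that the adversarial OGA is always permitted to pick $g_{m+1}$, i.e.\ that $g_{m+1}$ belongs to the set of maximizers of $g\mapsto\langle f_m^o,g\rangle$ over $g\in\D$. A short calculation gives $\langle f_m^o,g_k\rangle=0$ for $k\le m$ and $\tfrac{1}{2(m+1)}$ for $k\ge m+1$, so $g_{m+1}$ achieves the maximum among the $g_k$'s. The only real obstacle is to check that the two new elements $d_1,d_2$ do not strictly beat this value, which would force a productive greedy step and destroy the bound. This is exactly where the choice of the ``fresh'' coordinate $e_1$ inside $d_1,d_2$ is essential: the $e_1$-component of $f_m^o$ vanishes along the trajectory $g_1,\ldots,g_m$, and a direct computation gives $\langle f_m^o,d_1\rangle=\langle f_m^o,d_2\rangle=\tfrac{1}{2(m+1)}$, a tie rather than a strict beat. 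The adversarial tie-breaking rule is therefore free to keep picking $g_{m+1}$, the induction goes through, and the theorem follows.
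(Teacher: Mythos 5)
Your proof is correct and is essentially the paper's own argument: the same construction (the $g_k$'s sharing one ``hub'' coordinate, plus two extra elements $u_1,u_2$ supported on a fresh coordinate so that $f$ is their half-sum), the same induction showing the projection onto $\mathrm{span}\{g_1,\dots,g_m\}$ has residual $-\frac{1}{\sqrt{2}(m+1)}\sum e_k$, and the same tie argument allowing the adversarial realization to keep selecting $g_{m+1}$. Only the indexing of the basis differs, and your explicit Gram-matrix computation replaces the paper's direct verification of the projection identity.
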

\begin{proof} Let, as above, $\{e_k\}_{k=1}^\infty$ be a canonical basis of $\ell_2$. 
Take  a dictionary $\D:=\{g_k\}_{k=1}^\infty\cup \{u_1,u_2\}$ such that $g_k:=1/\sqrt{2}(e_{k+2}-e_2)$, $k=1,2,\dots$ and 
$$
u_1:= \frac{1}{\sqrt{2}}(e_1-e_2),\quad u_2:= \frac{1}{\sqrt{2}}(-e_1-e_2).
$$
It is clear that  $\D$ is a dictionary for the space $\ell_2$.    Consider $f:=-\frac{1}{\sqrt{2}}e_2= (u_1+u_2)/2 \in A_1^o(\D)$. Then it is clear that at the first iteration of the OGA we can choose $g_1$. By induction we can show that one of the realizations of the OGA consists in choosing element $g_n$ at the $n$th iteration of the OGA. Indeed, suppose that after $n$ iterations we have chosen $g_1,\dots,g_n$. Then in the same way as in the proof of Theorem \ref{BT1} we 
obtain that the element $h:=1/(n+1)\sum_{i=1}^n g_i$  is the orthogonal projection of the vector $f$ onto the subspace spanned by $g_1,...,g_n$ and
$$
f_n^o=f-h = -\frac{1}{ \sqrt{2}(n+1)}\sum_{k=2}^{n+2} e_k.
$$ 
Thus, we have for all $k>n$ that 
$$
|\<f_n^o,g_k\>|= |\<f_n^o,u_1\>|=|\<f_n^o,u_2\>|
$$
and, therefore, we can choose $g_{n+1}$ at the $(n+1)$th iteration of the OGA. 
This implies that
$$
\|f_m^o\|=\frac{1}{\sqrt{2(m+1)}}.
$$
 The proof of Theorem \ref{BT3} is complete.
\end{proof}

\begin{Remark}\label{BR1} We can make a slight modification of two elements of the dictionary
 in the proof of Theorem \ref{BT3} 
$$
u_1=(1/\sqrt{2}+\delta',-1/\sqrt{2}+\delta,0,0,...),
$$
$$
u_2=(-1/\sqrt{2}-\delta',-1/\sqrt{2}+\delta,0,0,...)
$$
where $\delta\in(0,1/\sqrt{2})$ and $||h_1||=||h_2||=1$ to guarantee that for the element $f:= (u_1+u_2)/2$ we have
$$
||f_m^o||=\left(\frac{1}{\sqrt{2}} -\delta\right) (m+1)^{-1/2}
$$
for all realizations of the OGA.

\end{Remark}

We note that a result similar to Theorem \ref{BT3} can be derived from \cite{TZ} (see also the proof of Theorem 5.24 in \cite{VTbook}, pp. 304--305). However, the above direct proof of Theorem \ref{BT3} is technically less involved. 

We now demonstrate that the technique used in the proof of Theorem \ref{BT3} can be used in proving a negative result for other type of greedy algorithm, namely, for the Relaxed Greedy Algorithm (RGA). We begin with the definition of the RGA (see, for instance, \cite{VTbook}, pp. 82--83). Let $H$ be a real Hilbert space and $\D$ be a dictionary in $H$. Consider a symmetrized dictionary $\D^\pm := \{\pm g: \, g\in\D\}$. For an element $h\in H$, let $g(h)$ denote an element from $\D^\pm$ which maximizes $\<h,g\>$ over all element $g\in\D^\pm$ (we assume the existence of such an element).

{\bf Relaxed Greedy Algorithm (RGA).} Let $f\in H$. Denote $f_0^r:=f$, $G_0^r(f):=0$. Then, for each $m\ge 1$ we inductively define
$$
G_m^r(f):=\left(1-\frac{1}{m+1}\right)G_{m-1}^r(f) + \frac{1}{m+1}g(f_{m-1}^r),
$$
$$
f_m^r:=f-G_m^r(f).
$$

\begin{Theorem}\label{BT4}
There are a dictionary $\D$ in the Hilbert space $\ell_2$ and an element $f\in A_1^o(\D)$, which is a linear combination of two elements of the dictionary $\D$, such that for the $m$th residual $f_m^r$ of one of the realizations of the RGA we have
$$
\|f_m^r\|=\frac{1}{\sqrt{2}} (m+1)^{-1/2}.
$$
\end{Theorem}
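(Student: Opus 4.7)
The natural strategy is to reuse the construction from the proof of Theorem \ref{BT3}: take the same dictionary $\D=\{g_k\}_{k=1}^\infty\cup\{u_1,u_2\}$ in $\ell_2$, with $g_k=\frac{1}{\sqrt 2}(e_{k+2}-e_2)$ and $u_1,u_2=\frac{1}{\sqrt 2}(\pm e_1-e_2)$, and the same element $f=-\frac{1}{\sqrt 2}e_2=(u_1+u_2)/2\in A_1^o(\D)$. The goal is then to exhibit a realization of the RGA whose residuals coincide with the OGA residuals produced in the proof of Theorem \ref{BT3}.

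Concretely, I would prove by induction on $m$ the two identities
$$
G_m^r(f)=\frac{1}{m+1}\sum_{i=1}^m g_i,\qquad f_m^r=-\frac{1}{\sqrt 2\,(m+1)}\sum_{k=2}^{m+2}e_k,
$$
together with the statement that $g_m$ is an admissible choice for $g(f_{m-1}^r)$ at the $m$th step. The base case $m=0$ is immediate. For the induction step, assuming the formula for $f_{m-1}^r$, a direct inner-product calculation, identical in spirit to the symmetry argument in the proof of Theorem \ref{BT3}, shows that
$$
\<f_{m-1}^r,g_k\>=\<f_{m-1}^r,u_1\>=\<f_{m-1}^r,u_2\>=\frac{1}{2m}\qquad\text{for } k\ge m,
$$
while $\<f_{m-1}^r,g_k\>=0$ for $k<m$; the corresponding inner products against $-g_k,-u_1,-u_2$ are nonpositive. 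Hence $g_m$ realizes the maximum of $\<f_{m-1}^r,g\>$ over $\D^\pm$, so it is a valid greedy choice. Plugging this into the RGA recursion
$$
G_m^r(f)=\Big(1-\frac{1}{m+1}\Big)G_{m-1}^r(f)+\frac{1}{m+1}\,g_m
$$
and using the inductive formula for $G_{m-1}^r$ gives $G_m^r(f)=\frac{1}{m+1}\sum_{i=1}^m g_i$; expanding $f-G_m^r(f)$ in the standard basis yields the claimed formula for $f_m^r$. Taking norms then gives $\|f_m^r\|=\frac{1}{\sqrt 2}(m+1)^{-1/2}$, as required.

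The essential content of the argument, such as it is, is the observation that for this highly symmetric dictionary the RGA weights $1/(m+1)$ coincide exactly with the Chebyshev (orthogonal) projection coefficients onto $\sp\{g_1,\dots,g_m\}$. Once this is noticed, the RGA trajectory is forced to agree with the OGA trajectory constructed in Theorem \ref{BT3}, and the lower bound on the residual norm is inherited automatically. No genuinely new estimate is required beyond verifying the two invariants displayed above, so the main ``obstacle'' is really just bookkeeping: checking that the RGA's convex-combination update preserves the equal-weight form of $G_m^r(f)$ under the chosen greedy selections.
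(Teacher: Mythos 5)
Your proposal is correct and follows essentially the same route as the paper: reuse the dictionary and element from Theorem \ref{BT3}, show by induction that choosing $g_m$ at step $m$ is an admissible greedy selection (the inner products $\<f_{m-1}^r,g_k\>$ for $k\ge m$ and $\<f_{m-1}^r,u_i\>$ all equal $\tfrac{1}{2m}$, the rest being nonpositive), and observe that the RGA convex update with weight $\tfrac{1}{m+1}$ reproduces exactly the equal-weight approximant $\tfrac{1}{m+1}\sum_{i=1}^m g_i$, hence the same residual $-\tfrac{1}{\sqrt 2\,(m+1)}\sum_{k=2}^{m+2}e_k$ as in the OGA case. The only difference is that you make the coincidence of the RGA weights with the orthogonal projection coefficients explicit, which the paper leaves implicit; the content is identical.
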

\begin{proof} Let $\D:=\{g_k\}_{k=1}^\infty\cup \{u_1,u_2\}$ and  $f:=-\frac{1}{\sqrt{2}}e_2= (u_1+u_2)/2 \in A_1^o(\D)$ be from the proof of Theorem \ref{BT3}. Then it is clear that at the first iteration of the RGA we can choose $g_1$. By induction we can show that one of the realizations of the RGA consists in choosing element $g_n$ at the $n$th iteration of the RGA. Indeed, suppose that after $n$ iterations we have chosen $g_1,\dots,g_n$. Then in the same way as in the proof of Theorem \ref{BT1}  for the RGA we have 
$$
f_n^r=f-G_n^r(f)=f-\frac{1}{n+1}\sum_{j=1}^n g_j=-\frac{1}{ \sqrt{2}(n+1)}\sum_{j=2}^{n+2} e_j.
$$
Hence as above for the OGA 
  we can choose $g_{n+1}$ at the $(n+1)$th iteration of the RGA. 
It follows that
$$
\|f_m^r\|=\frac{1}{\sqrt{2(m+1)}}.
$$

 The proof of Theorem \ref{BT4} is complete.
\end{proof}

\section{A posteriori error bounds in Banach spaces}
\label{D}

We begin with a proof of Theorem \ref{IT1v} from the Introduction. 

{\bf Proof of Theorem \ref{IT1v}.} The proof is based on the following analog of the General Error Reduction Lemma from \cite{DeTe}. 

\begin{Lemma}\label{DL1} {\bf General Error Reduction Lemma.}  Let $X$ be a uniformly smooth Banach space with modulus of smoothness $\rho(u)$. Take a number $\e\ge 0$ and two elements $f$, $f^\e$ from $X$ such that
$$
\|f-f^\e\| \le \e,\quad
f^\e/A(\e) \in A_1(\D),
$$
with some number $A(\e)>0$.

Suppose that $f$ is represented $f=f'+G'$ in such a way that $F_{f'}(G')=0$ and an element 
$\ff'$, $\|\ff'\|=1$, is chosen to satisfy $F_{f'}(\ff') \ge \theta \|F_{f'}\|_\D$, $\theta \ge 0$. 
Then we have  
\be\label{D1}
\inf_{\la\ge 0}\|f'-\la \ff'\| \le \|f'\|\inf_{\la\ge0}\left(1-\la \theta A(\e)^{-1}\left(1-\frac{\e}{\|f'\|}\right) + 2\rho\left(\frac{\la}{\|f'\|}\right)\right).
\ee
\end{Lemma}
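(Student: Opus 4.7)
The plan is to follow the standard derivation of error-reduction lemmas for greedy algorithms in uniformly smooth Banach spaces, adapting the bookkeeping to the present setting where the peak functional is anchored at $f'$ (rather than at the full residual) and an $\e$-perturbation of $f$ by $f^\e$ is allowed.

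First, I would derive the master inequality
$$
\|x-uy\| \le \|x\| - u F_x(y) + 2\|x\|\,\rho\!\left(\frac{u\|y\|}{\|x\|}\right)
$$
for any nonzero $x\in X$ and any $y\in X$, $u\ge 0$. This comes from combining the definition of the modulus of smoothness, applied to the unit vectors $x/\|x\|$ and $y/\|y\|$, with the trivial lower bound $\|x+uy\|\ge F_x(x+uy)=\|x\|+uF_x(y)$. Applying this with $x=f'$, $y=\ff'$ (so that $\|y\|=1$), and parameter $\la\ge 0$ gives
$$
\|f'-\la\ff'\| \le \|f'\| - \la F_{f'}(\ff') + 2\|f'\|\,\rho(\la/\|f'\|).
$$

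Next I need a lower bound for $F_{f'}(\ff')$. The greedy hypothesis gives $F_{f'}(\ff')\ge \theta\|F_{f'}\|_\D$, so it remains to bound $\|F_{f'}\|_\D$ from below using the a priori information on $f^\e$. From $f=f'+G'$ together with $F_{f'}(G')=0$ and $F_{f'}(f')=\|f'\|$, I obtain $F_{f'}(f)=\|f'\|$. Since $\|F_{f'}\|=1$ and $\|f-f^\e\|\le\e$, this yields $F_{f'}(f^\e)\ge \|f'\|-\e$. On the other hand, because $f^\e/A(\e)\in A_1(\D)$, for any $a\in A_1^o(\D)$ written as $a=\sum_i c_i g_i$ with $\sum_i|c_i|\le 1$, the symmetry of $\D$ gives $F_{f'}(a)\le\sum_i|c_i|\,\|F_{f'}\|_\D\le\|F_{f'}\|_\D$; passing to the closure by continuity of $F_{f'}$ I conclude $F_{f'}(f^\e)\le A(\e)\|F_{f'}\|_\D$. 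Combining the two estimates gives
$$
\|F_{f'}\|_\D \ge \frac{\|f'\|-\e}{A(\e)} = \frac{\|f'\|}{A(\e)}\left(1-\frac{\e}{\|f'\|}\right),
$$
and therefore
$$
F_{f'}(\ff') \ge \frac{\theta\,\|f'\|}{A(\e)}\left(1-\frac{\e}{\|f'\|}\right).
$$

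Substituting this into the master inequality and factoring $\|f'\|$ out of the right-hand side produces precisely the expression whose infimum over $\la\ge 0$ appears in (\ref{D1}); taking the infimum on both sides finishes the proof. I do not expect any serious obstacle. The only step with any subtlety is the closure argument showing $F_{f'}(f^\e)\le A(\e)\|F_{f'}\|_\D$ (rather than $F_{f'}(f^\e) < A(\e)\|F_{f'}\|_\D + \delta$), but this is handled routinely by density of $A_1^o(\D)$ in $A_1(\D)$ and continuity of $F_{f'}$. Everything else is bookkeeping around the standard modulus-of-smoothness estimate used in the proof of Theorem \ref{IT1} in \cite{VTbook}, Section~6.2.
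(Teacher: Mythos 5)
Your proposal is correct and follows essentially the same route as the paper's proof: the two-term modulus-of-smoothness inequality combined with the norming-functional bound $\|f'+\la\ff'\|\ge F_{f'}(f'+\la\ff')$, the identity $F_{f'}(f)=\|f'\|$ from $F_{f'}(G')=0$, and the duality estimate $\sup_{g\in\D}F_{f'}(g)\ge A(\e)^{-1}F_{f'}(f^\e)\ge A(\e)^{-1}(\|f'\|-\e)$. The only difference is cosmetic: you prove the fact $\sup_{g\in\D}F_{f'}(g)=\sup_{\phi\in A_1(\D)}F_{f'}(\phi)$ directly by density, where the paper cites Lemma 6.9 of \cite{VTbook}.
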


\begin{proof}
  For any $\la\ge 0$ we have 
\[
\|f'-\la\ff'\| + \|f'+\la\ff'\| \le 2\|f'\|\left(1+\rho(\la/\|f'\|)\right).
\]
Next,
\[
\|f' +\la \ff'\| \ge F_{f'}(f' +\la \ff') = \|f'\|+\la F_{f'}(\ff').
\]
By our assumption we get
$$
F_{f'}(\ff') \ge \theta \sup_{g\in \D}F_{f'}(g).
$$
Using Lemma 6.9 from \cite{VTbook}, p.343, and our assumption on $f$ and $f^\e$ we continue
\[
=\theta \sup_{\phi\in A_1(\D)}F_{f'}(\phi) \ge \theta A(\e)^{-1}F_{f'}(f^\e) \ge 
\theta A(\e)^{-1}(F_{f'}(f)-\e).
\]
The property $F_{f'}(G')=0$ provides 
\be\label{3.8n}
F_{f'}(f) = F_{f'}(f' + G') = F_{f'}(f') = \|f'\|.
\ee
Combining the above relations   we complete the proof of Lemma~\ref{DL1}.

\end{proof}

Consider the $k$th iteration of the WCGA. Set $f':= f_{k-1}^c$ and $G':=G_{k-1}^c$. Then, it is well known that $F_{f_{k-1}^c}(G_{k-1}^c) =0$ (see, for instance, \cite{VTbook}, Lemma 6.9, p.342). Thus, the condition $F_{f'}(G')=0$ is satisfied. Next, choose
$$
\ff_k:= \ff_k^c -P^c_{\Phi_{k-1}}(\ff_k^c),\qquad \ff':=\ff_k/v_k . 
$$
Further,
$$
F_{f'}(\ff') = F_{f^c_{k-1}}(\ff_k/v_k) = F_{f^c_{k-1}}(\ff_k^c/v_k) \ge (t_k/v_k) \|F_{f^c_{k-1}}\|_\D.
$$

Then by Lemma \ref{DL1} we obtain
$$
\|f_k^c\| \le \inf_{\la\ge 0}\|f_{k-1}^c-\la \ff'\|
$$
\be\label{D2}
  \le \|f_{k-1}^c\|\inf_{\la\ge0}\left(1-\frac{\la (t_k/v_k)}{ A(\e)}\left(1-\frac{\e}{\|f_{k-1}^c\|}\right) + 2\rho\left(\frac{\la}{\|f_{k-1}^c\|}\right)\right).
\ee

We continue the proof of Theorem~\ref{IT1v}. It is clear that it suffices to consider the case $A(\e)\ge\e$. Otherwise, $\|f_m^c\|\le\|f\|\le\|f^\e\|+\e\le2\e$. 
Also, assume $\|f_m^c\|>2\e$ (otherwise, Theorem~\ref{IT1v} trivially holds). Then, by monotonicity of $\{\|f_k^c\|\}$ we have for all $k=0,1,\dots,m$ that $\|f_k^c\|>2\e$. Set $a_k:= \|f_k^c\|$. Inequality (\ref{D2}) gives
\be\label{D3}
a_k  \le a_{k-1}\inf_{\la\ge0}\left(1-\frac{\la (t_k/v_k)}{ 2A(\e)} + 2\rho\left(\frac{\la}{a_{k-1}}\right)\right).
\ee
We complete estimation of $a_m$ by application of Lemma \ref{EL2}. We need to specify the corresponding parameters from Lemma \ref{EL2} and check that its conditions are satisfied. 
Set $B:= 2A(\e)$. Then inequality (\ref{D3}) gives inequality (\ref{E1}) from Lemma \ref{EL2} with $r_k=t_k/v_k$. By our assumption $A(\e)\ge \e$ we have
$$
a_0:= \|f\| \le \|f^\e\|+\e \le A(\e)+\e \le 2A(\e) =B.
$$
Finally, from the definition of modulus of smoothness $\rho(u)$ it follows that $\rho(2)\ge 1$. This implies $\ga 2^q\ge 1$. Therefore, applying Lemma \ref{EL2} we complete the proof of Theorem \ref{IT1v}.

We now proceed to the Weak Greedy Algorithm with Free Relaxation. The following version of relaxed greedy algorithm was introduced and studied in~\cite{T26} (see also~\cite{VTbook}, Chapter 6).
 
{\bf Weak Greedy Algorithm with Free Relaxation  (WGAFR).}
Let $\tau:=\{t_m\}_{m=1}^\infty$, $t_m\in[0,1]$, be a weakness  sequence. We define $f_0^e   :=f$ and $G_0^e  := 0$. Then for each $m\ge 1$ we inductively define

1). $\varphi_m^e   \in \D$ is any satisfying
$$
F_{f_{m-1}^e}(\varphi_m^e  ) \ge t_m  \|F_{f_{m-1}^e}\|_\D.
$$

2). Find $w_m$ and $ \lambda_m $ such that
$$
\|f-((1-w_m)G_{m-1}^e + \la_m\varphi_m^e)\| = \inf_{ \la,w}\|f-((1-w)G_{m-1}^e + \la\varphi_m^e)\|
$$
and define
$$
G_m^e:=   (1-w_m)G_{m-1}^e + \la_m\varphi_m^e.
$$

3). Denote
$$
f_m^e   := f-G_m^e.
$$

In a spirit, the WGAFR is close to the WCGA. The greedy steps (steps 1).) are identical. The approximation steps (steps 2).) are similar. In the WCGA we define $G_m^c$ to be the Chebyshev projection of $f$ on the at most $m$-dimensional $\Phi_m$ and in the WGAFR we define $G_m^e = P^c_{\Phi_m^e}(f)$, $\Phi_m^e := \sp(\ff_m^e, G_{m-1}^e)$, to be the Chebyshev projection of $f$ on the at most two-dimensional subspace $\Phi_m^e$. It is known
that an analog of Theorem \ref{IT1} holds for the WGAFR as well (see, for instance, \cite{VTbook}, Theorem 6.23, p.353). Here we formulate an analog of Theorem \ref{IT1v} for the WGAFR. Denote
$$
\phi_m := \ff_m^e - P^c_{\Phi_{m-1}^e}(\ff_m^e),\qquad u_m:=\|\phi_m\|.
$$
Then the following {\it a posteriori} result holds for the WGAFR.

\begin{Theorem}\label{DT1} Let $X$ be a uniformly smooth Banach space with modulus of smoothness $\rho(u)\le \gamma u^q$, $1<q\le 2$. Take a number $\e\ge 0$ and two elements $f$, $f^\e$ from $X$ such that
$$
\|f-f^\e\| \le \e,\quad
f^\e/A(\e) \in A_1(\D),
$$
with some number $A(\e)>0$.
Then we have
$$
\|f^{e}_m\| \le \max\left\{2\e,\, C(q,\gamma)A(\e)\Big(1+\sum_{k=1}^m(t_k/u_k)^p\Big)^{-1/p}\right\},
\quad p:=\frac{q}{q-1}.
$$
with $C(q,\ga)= 4(2\ga)^{1/q}$.
\end{Theorem}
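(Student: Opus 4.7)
The plan is to follow the proof of Theorem \ref{IT1v} very closely, with $\Phi_{m-1}$, $\ff_m^c$, $v_m$ replaced by $\Phi_{m-1}^e$, $\ff_m^e$, $u_m$. At iteration $k$ I apply the General Error Reduction Lemma (Lemma \ref{DL1}) with $f' := f_{k-1}^e$ and $G' := G_{k-1}^e$. The hypothesis $F_{f'}(G')=0$ holds because $G_{k-1}^e$ is the Chebyshev projection of $f$ onto $\Phi_{k-1}^e$, so by Lemma 6.9 of \cite{VTbook} the norming functional $F_{f_{k-1}^e}$ annihilates the whole subspace $\Phi_{k-1}^e$. For the direction in Lemma \ref{DL1} I take $\ff' := \phi_k/u_k$. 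Since $P^c_{\Phi_{k-1}^e}(\ff_k^e)\in \Phi_{k-1}^e$, the functional $F_{f_{k-1}^e}$ vanishes on it, and therefore
\[
F_{f_{k-1}^e}(\phi_k/u_k) \;=\; F_{f_{k-1}^e}(\ff_k^e/u_k) \;\ge\; (t_k/u_k)\,\|F_{f_{k-1}^e}\|_\D,
\]
giving Lemma \ref{DL1} with parameter $\theta = t_k/u_k$.

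The one ingredient specific to the WGAFR is that its two-parameter minimization dominates minimization along the single direction $\phi_k/u_k$. Indeed, $\phi_k \in \Phi_k^e := \sp(\ff_k^e, G_{k-1}^e)$, so for every $\la \ge 0$ the element $\la\phi_k/u_k$ can be written as $(1-w)G_{k-1}^e + \mu\ff_k^e$ for some $w,\mu$, and hence
\[
\|f_k^e\| \;=\; \inf_{w,\la}\bigl\|f - ((1-w)G_{k-1}^e + \la\ff_k^e)\bigr\| \;\le\; \inf_{\la\ge 0}\|f_{k-1}^e - \la\phi_k/u_k\|.
\]
Combined with Lemma \ref{DL1}, this produces the recursion, with $a_k := \|f_k^e\|$,
\[
a_k \;\le\; a_{k-1}\,\inf_{\la\ge 0}\left(1 - \frac{\la(t_k/u_k)}{A(\e)}\Bigl(1-\frac{\e}{a_{k-1}}\Bigr) + 2\rho\bigl(\la/a_{k-1}\bigr)\right).
\]

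As in the WCGA argument, I may assume $A(\e)\ge \e$ (otherwise $\|f\|\le \|f^\e\|+\e\le 2\e$) and $a_k > 2\e$ for all $k\le m$ (otherwise the claim is trivial by monotonicity of $\{a_k\}$, which holds because the WGAFR performs a best approximation at each step). Under these assumptions the factor $(1-\e/a_{k-1})$ is at least $1/2$, and the recursion becomes exactly the hypothesis of Lemma \ref{EL2} with $r_k = t_k/u_k$ and $B = 2A(\e)$; note also $a_0 = \|f\| \le A(\e)+\e \le B$ and $\gamma 2^q \ge \rho(2)\ge 1$, so all side conditions of Lemma \ref{EL2} are met. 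Its conclusion delivers the stated bound with $C(q,\gamma) = 4(2\gamma)^{1/q}$. I do not anticipate any real obstacle: the only conceptual point is the two-dimensional/one-dimensional comparison above, and everything else is a clean transcription of the Theorem \ref{IT1v} proof.
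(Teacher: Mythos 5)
The transcription is faithful except at the one step you yourself flag as the conceptual point, and that is exactly where the gap is. You assert $\phi_k \in \Phi_k^e = \sp(\ff_k^e, G_{k-1}^e)$. But $\phi_k = \ff_k^e - P^c_{\Phi_{k-1}^e}(\ff_k^e)$, and $P^c_{\Phi_{k-1}^e}(\ff_k^e)$ lies in $\Phi_{k-1}^e = \sp(\ff_{k-1}^e, G_{k-2}^e)$, which is \emph{not} contained in $\Phi_k^e$: the only part of $\Phi_{k-1}^e$ guaranteed to lie in $\Phi_k^e$ is the line $\sp(G_{k-1}^e)$ (since $G_{k-1}^e=(1-w_{k-1})G_{k-2}^e+\la_{k-1}\ff_{k-1}^e$), and for $k\ge 3$ the two planes $\Phi_{k-1}^e$ and $\Phi_k^e$ generically intersect in exactly that line. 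Consequently $G_{k-1}^e + \la\phi_k/u_k$ is in general \emph{not} of the form $(1-w)G_{k-1}^e + \mu\ff_k^e$, and your displayed comparison
$$
\|f_k^e\| \;\le\; \inf_{\la\ge 0}\|f_{k-1}^e - \la\phi_k/u_k\|
$$
does not follow from step 2) of the WGAFR: the two-parameter minimization only dominates directions inside $\Phi_k^e$, and $\phi_k$ leaves that subspace. In the WCGA case this issue never arises because the subspaces $\Phi_k$ are nested ($\Phi_{k-1}\subset\Phi_k$), which is precisely what your word-for-word transcription silently uses; for the WGAFR the nesting fails, since $\Phi_k^e$ only remembers the single vector $G_{k-1}^e$, not all of $\Phi_{k-1}^e$.

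The argument is repaired if one projects $\ff_k^e$ onto the one-dimensional subspace $\sp(G_{k-1}^e)$ instead of onto $\Phi_{k-1}^e$: then $\phi_k\in\Phi_k^e$, so the comparison with step 2) is legitimate, and since $F_{f_{k-1}^e}$ annihilates $G_{k-1}^e$ (it annihilates all of $\Phi_{k-1}^e\ni G_{k-1}^e$ by Lemma 6.9 of \cite{VTbook}) one still gets $F_{f_{k-1}^e}(\phi_k)=F_{f_{k-1}^e}(\ff_k^e)\ge t_k\|F_{f_{k-1}^e}\|_\D$; after that the recursion, the reduction to Lemma \ref{EL2} with $B=2A(\e)$, $r_k=t_k/u_k$, and the constant $4(2\ga)^{1/q}$ all go through exactly as you describe. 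Note, however, that this proves the bound with $u_k:=\|\ff_k^e-P^c_{\sp(G_{k-1}^e)}(\ff_k^e)\|$, which is at least as large as the $u_k$ of the statement (projection onto the larger subspace $\Phi_{k-1}^e$ approximates $\ff_k^e$ at least as well), so it yields a formally weaker inequality than the theorem as printed. For the theorem with $u_k$ literally as defined, your proof as written has a genuine hole, and the paper offers no guidance at this point: its entire proof is the remark that it ``repeats'' the proof of Theorem \ref{IT1v}, which is exactly the step that does not transcribe automatically.
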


\begin{Remark}\label{DR1} At the greedy step (step 1).) of the WGAFR we have a freedom in choosing an element $\ff_m^e$. We only require the inequality
\be\label{D5}
F_{f^{e}_{m-1}}(\varphi^{e}_m) \ge t_m  \| F_{f^{e}_{m-1}}\|_\D.
\ee
Theorem \ref{DT1} shows that we can use this freedom to our advantage, choosing at the $m$th iteration of the algorithm an element
$\ff_m^e$, which satisfies (\ref{D5}), with the smallest $u_m$. 
\end{Remark}

The proof of Theorem \ref{DT1} repeats the proof of Theorem \ref{IT1v}. We do not present it here. 

\section{Some technical lemmas}
\label{E}

We begin with a simple known lemma from \cite{VT75} (see Lemma 3.1 there). For the reader's convenience we present a proof of this lemma, which goes along the lines of the proof of Lemma 2.16 from \cite{VTbook}, p.91.

\begin{Lemma}\label{EL1} Let $\{a_k\}_{k=0}^m$ be a sequence of nonnegative numbers satisfying the inequalities
$$
a_0 \le A,\qquad a_k\le a_{k-1}(1-r_ka_{k-1}/A),\quad k=1,2,\dots,m
$$
with some nonnegative $r_k$. Then we have
$$
a_m\le A\left( 1+\sum_{k=1}^m r_k\right)^{-1}.
$$
\end{Lemma}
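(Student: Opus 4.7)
The plan is to pass to the reciprocals $1/a_k$ and exploit a telescoping lower bound. First I would handle a degenerate case: if $a_{k_0}=0$ for some $k_0\le m$, then the recursive inequality $a_k\le a_{k-1}(1-r_k a_{k-1}/A)$ forces $a_k=0$ for all $k\ge k_0$, and the asserted bound is trivial. So I may assume $a_k>0$ for every $k=0,1,\dots,m$.

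Under this positivity, the hypothesis automatically gives $1-r_k a_{k-1}/A\ge a_k/a_{k-1}>0$, hence $r_k a_{k-1}/A\in[0,1)$. The key step is then the elementary inequality $1/(1-x)\ge 1+x$ valid for $x\in[0,1)$, applied to $x=r_k a_{k-1}/A$. Taking reciprocals in the recursion and applying this bound yields
$$\frac{1}{a_k}\ge\frac{1}{a_{k-1}(1-r_k a_{k-1}/A)}\ge\frac{1}{a_{k-1}}\Big(1+\frac{r_k a_{k-1}}{A}\Big)=\frac{1}{a_{k-1}}+\frac{r_k}{A}.$$

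Telescoping this from $k=1$ to $k=m$ and using the initial bound $1/a_0\ge 1/A$ produces
$$\frac{1}{a_m}\ge\frac{1}{a_0}+\frac{1}{A}\sum_{k=1}^m r_k\ge\frac{1}{A}\Big(1+\sum_{k=1}^m r_k\Big),$$
which upon inverting gives the claim. There is no real obstacle here: the argument is essentially a two-line reciprocal trick, and the only mild subtlety is the up-front case split ensuring that the denominator $1-r_k a_{k-1}/A$ is strictly positive whenever we divide by it.
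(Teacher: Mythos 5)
Your proof is correct and follows essentially the same route as the paper's: pass to reciprocals, use $1/(1-x)\ge 1+x$, telescope, and finish with $1/a_0\ge 1/A$, after disposing of the degenerate case $a_k=0$. The only cosmetic difference is that the paper reduces the case split to "$a_m=0$ is trivial" (using the monotonicity $a_0\ge a_1\ge\cdots\ge a_m$), while you phrase it as zeros propagating forward; both are fine.
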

\begin{proof} Our assumption implies that $a_0\ge a_1 \ge \cdots \ge a_m$. If $a_m=0$ then the conclusion of Lemma \ref{EL1} is trivial. Suppose that $a_m>0$. Then for all $k\le m$ we have $a_k>0$ and
$$
\frac{1}{a_m} \ge \frac{1}{a_{m-1}}\frac{1}{1-r_ma_{m-1}/A} \ge  \frac{1}{a_{m-1}}\left(1+r_ma_{m-1}/A\right) 
$$
$$
\ge  \frac{1}{a_{m-1}} + \frac{r_m}{A} \ge\cdots\ge  \frac{1}{a_{0}} + \frac{1}{A}\sum_{k=1}^m r_m \ge
\frac{1}{A} \left(1+\sum_{k=1}^m r_m\right),
$$
which proves the lemma.
\end{proof}

The following lemma is often used in an implicit form in proofs of the rate of approximation of greedy algorithms in Banach spaces (see, for instance, \cite{VTbook}, p.345). 

\begin{Lemma}\label{EL2} Let sequences $\{a_k\}_{k=0}^m$ of positive numbers, 
$\{r_k\}_{k=1}^m$ of nonnegative numbers, and numbers $B>0$, $\ga>0$, $q\in (1,2]$ be such 
that $a_0\le B$, for $k=1,\dots,m$
\be\label{E1}
a_k \le a_{k-1}\inf_{\la\ge 0} \left(1-\frac{\la r_k}{B} +2\ga\left( \frac{\la}{a_{k-1}}\right)^q\right),
\ee
and, in addition, $\ga 2^q \ge 1$. Then 
$$
a_m \le C(q,\ga)B\left(1+\sum_{k=1}^m r_k^p\right)^{-1/p},\quad p:=\frac{q}{q-1},
$$
with $C(q,\ga) = 2(2\ga)^{1/q}$.
\end{Lemma}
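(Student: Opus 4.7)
\medskip\noindent\textbf{Proof plan for Lemma \ref{EL2}.}
My plan is to carry out the infimum in (\ref{E1}) explicitly, convert the resulting multiplicative recursion to an additive one in the spirit of Lemma~\ref{EL1} (but for the exponent $p$ in place of $1$), telescope, and finally absorb the constants using the calibration hypothesis $\ga 2^q\ge 1$.

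First, I would differentiate the expression $1-\la r_k/B + 2\ga(\la/a_{k-1})^q$ in $\la$. Setting $\alpha_k:=r_ka_{k-1}/B$ and $y:=\la/a_{k-1}$, the optimum occurs at $y_\ast=(\alpha_k/(2\ga q))^{1/(q-1)}$. Substituting back and using the identities $q/(q-1)=p$ and $1-1/q=1/p$, the infimum collapses to
$$
1 - \frac{1}{p\,(2\ga q)^{p-1}}\,\alpha_k^p.
$$
So (\ref{E1}) becomes the one-step recursion
$$
a_k \le a_{k-1}\Bigl(1 - D\,(r_k a_{k-1}/B)^p\Bigr),\qquad D:=\frac{1}{p\,(2\ga q)^{p-1}}.
$$
(The positivity hypothesis on $a_k$ guarantees that the quantity in parentheses is strictly positive, so $D(r_k a_{k-1}/B)^p<1$.)

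Next, in the style of the proof of Lemma~\ref{EL1}, I would pass to reciprocals of $p$th powers. Raising to the power $-p$ and applying the elementary chain $(1-x)^{-p}\ge(1+x)^p\ge 1+px$, valid for $x\in[0,1)$ and $p\ge 1$, I would obtain
$$
a_k^{-p}\ \ge\ a_{k-1}^{-p} + \frac{pD}{B^p}\,r_k^p\ =\ a_{k-1}^{-p} + \frac{r_k^p}{B^p(2\ga q)^{p-1}}.
$$
Telescoping from $k=1$ to $m$ and using $a_0\le B$, this yields
$$
a_m\ \le\ B\Bigl(1+(2\ga q)^{-(p-1)}\sum_{k=1}^m r_k^p\Bigr)^{-1/p}.
$$

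It remains to simplify the constant. Pulling out $(2\ga q)^{p-1}$ gives
$$
a_m\ \le\ (2\ga q)^{(p-1)/p}\,B\,\Bigl((2\ga q)^{p-1}+\sum_{k=1}^m r_k^p\Bigr)^{-1/p},
$$
and since $(p-1)/p=1/q$, the leading factor is $(2\ga q)^{1/q}$. The hypothesis $\ga 2^q\ge 1$ together with $q\in(1,2]$ forces $2\ga q\ge q\cdot 2^{1-q}\ge 1$ (the function $q\mapsto q\cdot 2^{1-q}$ equals $1$ at the endpoints $q=1,2$ and is $\ge 1$ in between), so $(2\ga q)^{p-1}\ge 1$ and I may replace it by $1$ in the denominator. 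Finally, $(2\ga q)^{1/q}=q^{1/q}(2\ga)^{1/q}\le\sqrt{2}\,(2\ga)^{1/q}\le 2(2\ga)^{1/q}=C(q,\ga)$ on $q\in(1,2]$, and the conclusion follows. The only mildly delicate step is this last constant calibration: the hypothesis $\ga 2^q\ge 1$ is exactly what is needed to push $(2\ga q)^{p-1}$ above $1$ so that the sum in the denominator can be compared with $1+\sum r_k^p$.
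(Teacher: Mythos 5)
Your proof is correct, and it follows the same overall skeleton as the paper's (reduce the infimum in (\ref{E1}) to a one-step recursion of the form $a_k\le a_{k-1}\bigl(1-c\,r_k^p a_{k-1}^p/B^p\bigr)$, then iterate), but the execution differs in three concrete ways. First, you compute the exact minimizer in $\lambda$, whereas the paper sidesteps the calculus by choosing the convenient suboptimal $\lambda$ that balances \emph{half} of the linear term against the $q$-power term; your choice yields the slightly better intermediate constant $D=\bigl(p(2\gamma q)^{p-1}\bigr)^{-1}$, the paper's gives $A_q^{-1}$ with $A_q=2(4\gamma)^{1/(q-1)}$, and both land under $C(q,\gamma)=2(2\gamma)^{1/q}$. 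Second, the paper raises the one-step inequality to the power $p$ (using $x^r\le x$ for $0\le x\le1$, $r\ge1$) and then invokes Lemma \ref{EL1} applied to the sequence $\{a_k^p\}$; you instead re-derive that mechanism inline by passing to reciprocals $a_k^{-p}$ and telescoping via the Bernoulli-type chain $(1-x)^{-p}\ge(1+x)^p\ge 1+px$ — logically the same device, just not routed through the stated lemma. Third, the calibration hypothesis $\gamma 2^q\ge1$ enters at different points: in the paper it guarantees $A_q\ge1$ so that $a_0^p\le A_qB^p$ and Lemma \ref{EL1} applies, while in your argument it guarantees $2\gamma q\ge q\,2^{1-q}\ge1$ so that $(2\gamma q)^{p-1}$ can be replaced by $1$ in the denominator; your verification of $q\,2^{1-q}\ge1$ on $(1,2]$ and of $q^{1/q}\le\sqrt2$ is correct, so the constant comes out as $(2\gamma q)^{1/q}\le 2(2\gamma)^{1/q}$ as required. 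The net effect is a self-contained proof with marginally sharper constants at the price of a short optimization computation; the paper's version trades that for citing Lemma \ref{EL1} and avoiding the exact minimization.
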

\begin{proof}
 Our assumptions guarantee that $B\ge a_0 \ge a_1 \ge \cdots \ge a_m$. 
 Choose $\la$ from the equation
$$
\frac{\la r_k}{2B} = 2\gamma \left(\frac{\la}{a_{k-1}}\right)^q
$$
what implies that
$$
\la = a_{k-1}^{\frac{q}{q-1}} (4\gamma B)^{-\frac{1}{q-1}}r_k^{\frac{1}{q-1}}.
$$
Denote 
$$
A_q := 2(4\gamma)^{\frac{1}{q-1}} .
$$
Using notation $p:= \frac{q}{q-1}$ we get from (\ref{E1})
$$
a_k \le a_{k-1}\left(1-\frac{1}{2}\frac{\la r_k}{B}\right) = a_{k-1}\left(1-\frac{r_k^pa_{k-1}^p}{A_qB^p}\right).
$$
Raising both sides of this inequality to the power $p$ and taking into account the inequality $x^r\le x$ for $r\ge 1$, $0\le x\le 1$, we obtain
$$
a_k^p \le a_{k-1}^p \left(1-\frac{r^p_ka_{k-1}^p}{A_qB^p}\right).
$$
By Lemma \ref{EL1}, using the bounds $a_0 \le B$ and $A_q>1$, we get
$$
a_m^p \le A_qB^p\left(1+\sum_{k=1}^m r_k^p\right)^{-1},
$$
which implies
$$
a_m\le C(q,\gamma)B\left(1+\sum_{k=1}^m r_k^p\right)^{-1/p}
$$
with $C(q,\gamma)=A_q^{1/p}= 2(2\gamma)^{1/q}$.
Lemma \ref{EL2} is proved. 

 \end{proof}
 {\bf Acknowledgement.} The work was supported by the Russian Federation Government Grant N{\textsuperscript{\underline{o}}}14.W03.31.0031.

\end{document}